\documentclass{amsart}[12pt]

\usepackage{amsmath, amsthm, amscd, amsfonts,}
\usepackage{graphicx,float}
\usepackage{amssymb}
\usepackage[cmtip,arrow]{xy}
\usepackage{pb-diagram,pb-xy}

%
%







\DeclareMathOperator{\dom}{dom}

\DeclareMathOperator{\Col}{Col}
\DeclareMathOperator{\Add}{Add}

\def\MPB{{\mathbb{P}}}

\def\a{\alpha}

\setlength{\textheight}{22cm} \setlength{\textwidth}{14cm}
\setlength{\oddsidemargin}{1cm} \setlength{\evensidemargin}{1cm}

\newtheorem{theorem}{Theorem}[section]
\newtheorem{lemma}[theorem]{Lemma}

\newtheorem{corollary}[theorem]{Corollary}


\newtheorem{remark}[theorem]{Remark}
\newtheorem{claim}[theorem]{Claim}
\newtheorem{question}[theorem]{Question}

\numberwithin{equation}{section}

\usepackage{pb-diagram}

\def\rmark{\mbox{$\rm\bf\rule{0.06em}{1.45ex}\kern-0.05em R$}}
\def\pmark{\mbox{$\rm\bf\rule{0.06em}{1.45ex}\kern-0.05em P$}}
\def\nmark{\mbox{$\rm\bf\rule{0.06em}{1.45ex}\kern-0.05em N$}}
\def\vdash{\mbox{$\rm\| \kern-0.13em -$}}

\def\rmark{\mbox{$\rm\bf\rule{0.06em}{1.45ex}\kern-0.05em R$}}
\def\pmark{\mbox{$\rm\bf\rule{0.06em}{1.45ex}\kern-0.05em P$}}
\def\nmark{\mbox{$\rm\bf\rule{0.06em}{1.45ex}\kern-0.05em N$}}
\def\vdash{\mbox{$\rm\| \kern-0.13em -$}}
\newcommand{\lusim}[1]{\smash{\underset{\raisebox{1.2pt}[0cm][0cm]{$\sim$}}
{{#1}}}}

\title[On a question of Silver about gap-two cardinal transfer principles]{On a question of Silver about gap-two cardinal transfer principles}

\author[M. Golshani and Sh. Mohsenipour ]{Mohammad Golshani and Shahram Mohsenipour}

\date{}

\thanks{The first author's research has been supported by a grant from IPM (No. 95030417). }

\thanks{The research of the second  author was in part supported by a grant from IPM (No.
95030403).}

\begin{document}
\begin{abstract}
Assuming the existence of a Mahlo cardinal, we produce a generic extension of G\"{o}del's constructible universe $L$, in which the $GCH$ holds and the transfer principles
$(\aleph_2, \aleph_0) \rightarrow (\aleph_3, \aleph_1)$ and $(\aleph_3, \aleph_1) \rightarrow (\aleph_2, \aleph_0)$ fail simultaneously. The result answers a question
of
Silver from 1971. We also extend our result to higher gaps.
\end{abstract}

\thanks{ } \maketitle

\section{Introduction}
In this paper we study cardinal transfer principles introduced by Vaught \cite{vaught1}, \cite{vaught2}, and prove
some consistency results related to them.

Assume $\mathcal{L}$ is a first order language which contains a unary predicate $U.$ By a $(\kappa, \lambda)$-model for $\mathcal{L}$, we mean a
model $\mathcal{M}= (M, U^{\mathcal{M}}, \dots)$, where $|M|=\kappa$ and $|U^{\mathcal{M}}|=\lambda,$ where $U^{\mathcal{M}}$ is the interpretation of $U$
in $\mathcal{M}.$ Following Devlin \cite{devlin}, we use the notation 
\[
(\kappa, \lambda) \rightarrow (\kappa', \lambda')
\]
to mean the following transfer principle: 

$\hspace{1.5cm}$ For every countable first order language $\mathcal{L}$ as above, and  every  

$\hspace{1.5cm}$ first order theory $T$ of $\mathcal{L}$, if $T$ has a $(\kappa, \lambda)$-model, then it has a

$\hspace{1.5cm}$ $(\kappa', \lambda')$-model.

For any natural number $n \geq 1,$ by the \emph{gap-$n$-cardinal transfer principle} we mean the statement
\[
\forall \kappa~ \forall \lambda~ (\kappa^{+n}, \kappa) \rightarrow (\lambda^{+n}, \lambda).
\]

In \cite{silver}, Silver proved the independence of gap-2-cardinal transfer principle. Starting from an inaccessible cardinal, he was able to produce a model in which the cardinal transfer $(\aleph_3, \aleph_1) \rightarrow (\aleph_2, \aleph_0)$ fails. His proof is simply as follows:
By a result of Vaught \cite{vaught2},  there exists a sentence $\phi_{KH}$ in a suitable first order  language, such that for any infinite cardinal $\beta$,
\begin{center}
$\phi_{KH}$ has a $(\beta^{++}, \beta)$-model $\iff$ there exists a $\beta^+$-Kurepa tree.
\end{center}
Now, starting from an inaccessible cardinal $\kappa$, Silver shows that in the generic extension by the Levy collapse $\Col(\aleph_1, < \kappa),$ there are no $\aleph_1$-Kurepa trees. If we start with $V=L,$ then in the resulting extension, there are $\aleph_2$-Kurepa trees, and so
 the transfer principle  $(\aleph_3, \aleph_1) \rightarrow (\aleph_2, \aleph_0)$ fails in it. Similarly if we force with $\Col(\aleph_2, < \kappa)$, then in the extension there are no $\aleph_2$-Kurepa trees, and we can use it to prove the independence of  $(\aleph_2, \aleph_0) \rightarrow (\aleph_3, \aleph_1)$. The following question was asked by Silver \cite{silver}.
\begin{question} \footnote{On page 388 of \cite{silver}, Silver writes ``One can also get a $GCH$ model in which $(\aleph_7, \aleph_5) \rightarrow (\aleph_3, \aleph_1)$ fails
and a $GCH$ model which $(\aleph_3, \aleph_1) \rightarrow (\aleph_7, \aleph_5)$ fails (though I don't see how to get the $\rightarrow$ both ways to fail simultaneously)''.}
Is it consistent with $GCH$ that both  transfer principles $(\aleph_3, \aleph_1) \rightarrow (\aleph_2, \aleph_0)$  and $(\aleph_2, \aleph_0) \rightarrow (\aleph_3, \aleph_1)$ fail simultaneously$?$
\end{question}

\begin{remark}
If we drop the $GCH$ assumption from the question, then one can easily answer the above question. Assume $\kappa$
is an inaccessible cardinals and let $G \ast H$ be $\Col(\aleph_1, < \kappa) \ast \lusim{\Add}(\aleph_0, \kappa)$-generic over $L$.
In the generic extension $L[G \ast H]$ there are no $\aleph_1$-Kurepa trees (see Devlin \cite{devlin1}) but  there exists an $\aleph_2$-Kurepa tree, and hence by the remarks above,
the transfer principle $(\aleph_3, \aleph_1) \rightarrow (\aleph_2, \aleph_0)$
fails in $L[G \ast H].$

On the other hand $L[G \ast H]$ satisfies ``$2^{\aleph_0}=2^{\aleph_1}= \kappa=\aleph_2$''.
Let $\mathcal{L}=(U, F)$, where $U$ is a unary predicate symbol and $F$ is a binary predicate symbol.
let $T$ be an $\mathcal{L}$-theory which says the following:
\begin{enumerate}
\item $\forall x,y ~  F(x, y) \to U(y)$. In particular, for each $x$, $F$ determines a subset $F_x$ of $U,$ namely, $F_x=\{y: F(x, y)       \}.$
\item For all $x \neq x', F_x \neq F_y$.
\end{enumerate}
Then $T$ has an $ (\aleph_2, \aleph_0)$ model but it does not have an $ (\aleph_3, \aleph_1)$-model (as otherwise we should have $2^{\aleph_1} \geq \aleph_3$).
Thus the transfer principle $(\aleph_2, \aleph_0) \rightarrow (\aleph_3, \aleph_1)$
fails in $L[G \ast H].$

\end{remark}
We give an affirmative answer to this question by proving the following  theorem:
\begin{theorem} \label{silver problem}
Assume $\kappa$ is a Mahlo cardinal. Then there is a generic extension of $L$, the G\"{o}del's constructible universe, in which the $GCH$ holds and the cardinal transfer
principles
 $(\aleph_{2}, \aleph_0) \rightarrow (\aleph_3, \aleph_1)$  and $(\aleph_3, \aleph_1) \rightarrow (\aleph_{2}, \aleph_0)$ fail.
\end{theorem}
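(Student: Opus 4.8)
The plan is to build a single generic extension of $L$ in which, simultaneously, there is an $\aleph_2$-Kurepa tree but no $\aleph_1$-Kurepa tree (killing $(\aleph_3,\aleph_1)\rightarrow(\aleph_2,\aleph_0)$ via Vaught's sentence $\phi_{KH}$), and there is an $\aleph_3$-Kurepa tree but no $\aleph_2$-Kurepa tree (killing $(\aleph_2,\aleph_0)\rightarrow(\aleph_3,\aleph_1)$ the same way), all while preserving $GCH$. The obstacle, exactly as Silver observed, is that the Levy collapse $\Col(\aleph_1,<\kappa)$ that destroys $\aleph_1$-Kurepa trees simultaneously destroys $\aleph_2$-Kurepa trees, so one cannot just iterate the two collapses. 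The idea is to use a Mahlo cardinal rather than an inaccessible so that, \emph{after} collapsing to make $\kappa=\aleph_2$ and destroy $\aleph_1$-Kurepa trees, the stationarily many inaccessibles below $\kappa$ still leave us room to force an $\aleph_3$-Kurepa tree and a further collapse that destroys $\aleph_2$-Kurepa trees without resurrecting $\aleph_1$-Kurepa trees.

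First I would fix the Mahlo cardinal $\kappa$ in $L$ and, by a preliminary reflection/tidying forcing if needed, arrange $GCH$ together with convenient combinatorial principles (a $\diamondsuit$-type or $\square$-type sequence) at the relevant cardinals; over $L$ this is automatic. Next I would force with $\Col(\aleph_1,<\kappa)$, which makes $\kappa=\aleph_2$, preserves $GCH$, kills all $\aleph_1$-Kurepa trees (by Devlin's argument), and, starting from $L$, leaves an $\aleph_2=\kappa$-Kurepa tree in place; call this model $V_1$. The heart of the argument is then a second forcing over $V_1$, an $\aleph_2$-closed (hence $\aleph_1$-Kurepa-tree-preserving, in the sense of not creating new ones on $\aleph_1$) iteration or product that (a) collapses a further inaccessible $\mu$ with $\aleph_2<\mu<\ldots$ — wait, rather: since $\kappa=\aleph_2$ is Mahlo-derived, I would instead have \emph{kept} a second inaccessible $\kappa'$ in reserve, and now run $\Col(\aleph_2,<\kappa')$ over $V_1$ to make $\kappa'=\aleph_3$, destroying $\aleph_2$-Kurepa trees while being $\aleph_2$-closed so it adds no new subsets of $\aleph_1$ and hence creates no $\aleph_1$-Kurepa tree; starting the whole construction from $L$ ensures a genuine $\aleph_3=\kappa'$-Kurepa tree survives. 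Thus the actual setup is: $\kappa<\kappa'$ both inaccessible in $L$ with $\kappa$ Mahlo (or merely: two inaccessibles suffice if we are only after the gap-two statement, but Mahlo-ness is what the authors invoke, presumably to handle the simultaneity of tree existence cleanly), and the iteration is $\Col(\aleph_1,<\kappa)*\Col(\aleph_2,<\kappa')$.

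The key lemmas to verify, in order, are: (1) $GCH$ is preserved throughout — routine from the closure and chain-condition properties of Levy collapses under $GCH$; (2) $\Col(\aleph_1,<\kappa)$ over $L$ destroys $\aleph_1$-Kurepa trees (Devlin) and leaves $\kappa$ as $\aleph_2$ carrying an $\aleph_2$-Kurepa tree (built from an $L$-tree on $\kappa$, using that $\kappa$ stays a cardinal and the collapse is $\kappa$-c.c.); (3) the second step $\Col(\aleph_2,<\kappa')$ is $\aleph_2$-closed, so it adds no new $\aleph_1$-sequences, whence $V_1$ and the final model have the same subsets of $\omega_1$ and the same trees on $\omega_1$ — so no $\aleph_1$-Kurepa tree is resurrected; (4) the second step destroys all $\aleph_2$-Kurepa trees — this is the analogue of Devlin's argument one level up and needs that $\kappa'$ is inaccessible and that $GCH$/the relevant $\square$ or approachability holds at $\aleph_2$ in $V_1$, which is where I expect to spend the most care, since the ``no Kurepa tree'' argument after a Levy collapse uses that every potential tree is captured at a stationary set of small inaccessibles; (5) the final model has an $\aleph_3=\kappa'$-Kurepa tree, again built from an $L$-tree, using that $\kappa'$ is preserved and the forcing is $\kappa'$-c.c. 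Then Vaught's equivalence with $\phi_{KH}$ converts (2)+(3) into the failure of $(\aleph_3,\aleph_1)\rightarrow(\aleph_2,\aleph_0)$ and (4)+(5) into the failure of $(\aleph_2,\aleph_0)\rightarrow(\aleph_3,\aleph_1)$.

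The main obstacle is step (4) combined with (2): one must destroy $\aleph_2$-Kurepa trees \emph{without} the machinery that destroyed $\aleph_1$-Kurepa trees damaging the preserved $\aleph_2$-Kurepa tree or, conversely, resurrecting an $\aleph_1$-Kurepa tree — and it is precisely for threading this needle that the stronger hypothesis (a Mahlo cardinal, giving stationarily many inaccessibles to feed a more delicate iteration with guessing/anticipation of potential Kurepa trees) is needed rather than two bare inaccessibles. I expect the real construction to replace the naive two-step product above by a carefully reversed-Easton or Mitchell-style iteration along the Mahlo cardinal that, at each inaccessible stage, shoots a branch or collapses so as to pre-emptively kill named potential $\aleph_2$-Kurepa trees while an appropriately chosen ``fat'' subtree of the $L$-tree is protected; verifying that this iteration is $\aleph_2$-closed on a tail and $\kappa$-c.c., and that it genuinely leaves an $\aleph_3$-Kurepa tree, is the technical core.
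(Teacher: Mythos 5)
Your plan is internally contradictory, and the contradiction is exactly the obstruction Silver himself pointed out. Via Vaught's sentence $\phi_{KH}$, the failure of $(\aleph_3,\aleph_1)\rightarrow(\aleph_2,\aleph_0)$ requires an $\aleph_2$-Kurepa tree (and no $\aleph_1$-Kurepa tree), while the failure of $(\aleph_2,\aleph_0)\rightarrow(\aleph_3,\aleph_1)$ via the same sentence would require an $\aleph_1$-Kurepa tree and \emph{no} $\aleph_2$-Kurepa tree. So no amount of iteration engineering can realize both failures through $\phi_{KH}$ alone: you are asking for an $\aleph_2$-Kurepa tree to exist and not exist in the same model. (Your formulation is also off by one level: an $\aleph_3$-Kurepa tree witnesses an $(\aleph_4,\aleph_2)$-model of $\phi_{KH}$, which is irrelevant to $(\aleph_2,\aleph_0)\rightarrow(\aleph_3,\aleph_1)$; and after you have destroyed all $\aleph_1$-Kurepa trees, $\phi_{KH}$ has no $(\aleph_2,\aleph_0)$-model at all, so it cannot witness the failure of $(\aleph_2,\aleph_0)\rightarrow(\aleph_3,\aleph_1)$ — that implication is vacuous for a theory with no $(\aleph_2,\aleph_0)$-model.) Consequently steps (4) and (5) of your outline, the ``technical core'' you defer, cannot be carried out as stated.

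The missing idea is to use a \emph{different theory} for the second direction. The paper takes Jensen's theory $T_J$ (``$ZFC^-+GCH+A^+$ is the largest cardinal $+$ $\mathcal{C}$ is a $\Box_{A^+}$-sequence''). One first collapses the \emph{least} inaccessible $\lambda<\kappa$ via $\Col(\aleph_1,<\lambda)$ to kill $\aleph_1$-Kurepa trees, keeping $\kappa$ Mahlo; then one applies Jensen's forcing $\Col(\aleph_2,<\kappa)\ast\lusim{Add}(\Diamond^+_{\aleph_2})$, which is $\aleph_2$-distributive (so no $\aleph_1$-Kurepa tree reappears), forces $\Diamond^+_{\aleph_2}$ (so an $\aleph_2$-Kurepa tree \emph{exists}, giving the first failure), and arranges that $T_J$ has no $(\aleph_3,\aleph_1)$-model. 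The second failure then comes from exhibiting an $(\aleph_2,\aleph_0)$-model of $T_J$: since the new $\aleph_2$ equals $\lambda$, an $L$-inaccessible that is not Mahlo, $\Box_{\aleph_1}$ holds, and $(H(\lambda)^{L[G]},\in,\aleph_0,\mathcal{C})$ is such a model. This is why the Mahlo hypothesis appears: not to thread a Kurepa-tree needle, but to run Jensen's anti-$T_J$ construction at $\kappa$ while a smaller, non-Mahlo $L$-inaccessible supplies the positive $(\aleph_2,\aleph_0)$-model.
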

Then we prove a general model theoretic fact, and use it to extend the above result to higher gaps:
\begin{theorem} \label{extended silver problem}
Assume $\kappa$ is a Mahlo cardinal. Then there is a generic extension of $L$ in which the $GCH$ holds and for all $n \geq 2,$ the cardinal transfer principles
 $(\aleph_{n}, \aleph_0) \rightarrow (\aleph_{n+1}, \aleph_1)$  and $(\aleph_{n+1}, \aleph_1) \rightarrow (\aleph_{n}, \aleph_0)$ fail.
\end{theorem}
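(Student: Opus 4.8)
The strategy is to deduce Theorem~\ref{extended silver problem} from Theorem~\ref{silver problem} together with a single, purely model-theoretic ``gap-compression'' lemma, provable in $ZFC$: for every $n\ge 2$,
\[
(\aleph_n,\aleph_0)\to(\aleph_{n+1},\aleph_1)\ \Longrightarrow\ (\aleph_2,\aleph_0)\to(\aleph_3,\aleph_1),
\]
\[
(\aleph_{n+1},\aleph_1)\to(\aleph_n,\aleph_0)\ \Longrightarrow\ (\aleph_3,\aleph_1)\to(\aleph_2,\aleph_0).
\]
Granting the lemma, Theorem~\ref{extended silver problem} is immediate: let $W$ be the generic extension of $L$ furnished by Theorem~\ref{silver problem}, in which $GCH$ holds and both $(\aleph_2,\aleph_0)\to(\aleph_3,\aleph_1)$ and $(\aleph_3,\aleph_1)\to(\aleph_2,\aleph_0)$ fail; taking contrapositives of the two implications, for every $n\ge 2$ both $(\aleph_n,\aleph_0)\to(\aleph_{n+1},\aleph_1)$ and $(\aleph_{n+1},\aleph_1)\to(\aleph_n,\aleph_0)$ fail in $W$ as well, and $GCH$ holds there.

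To prove the lemma I would use a padding (spectrum-stretching) argument; I describe the first implication, the second being symmetric. Fix a countable language $\mathcal{L}=(U,\dots)$ and an $\mathcal{L}$-theory $T$ with an $(\aleph_2,\aleph_0)$-model $M$. Expand $\mathcal{L}$ to $\mathcal{L}^\ast$ by adjoining unary predicates $W_2,\dots,W_n$, auxiliary unary predicates $V_1,V_2$, and suitable binary relation and function symbols, and let $T^\ast$ assert: (a) all axioms of $T$ relativized to $W_2$, together with $U\subseteq W_2\subseteq W_3\subseteq\cdots\subseteq W_n=$ the whole universe; (b) for each $i$ with $2\le i<n$ a relation $<_i$ linearly orders $W_{i+1}$ so that a section of a function $f_i$ injects each proper $<_i$-initial segment of $W_{i+1}$ into $W_i$ (whence $|W_{i+1}|\le |W_i|^+$ in any model, since a linear order all of whose proper initial segments have size $\le\mu$ has size $\le\mu^+$); (c) $V_1$ carries an order whose proper initial segments inject into $U$, $V_2$ carries an order whose proper initial segments inject into $V_1$, and a function injects $W_2$ into $V_2$ (whence $|W_2|\le |U|^{++}$ in any model). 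Starting from $M$ one builds an $(\aleph_n,\aleph_0)$-model of $T^\ast$: take $W_2$ to be the universe of $M$, realize $V_1,V_2$ as $\omega_1,\omega_2$ and inject $W_2$ into $V_2$, and stack $W_3,\dots,W_n$ of sizes $\aleph_3,\dots,\aleph_n$ carrying $\omega_3,\dots,\omega_n$ as the $<_i$-orders. Apply the hypothesis $(\aleph_n,\aleph_0)\to(\aleph_{n+1},\aleph_1)$ to get an $(\aleph_{n+1},\aleph_1)$-model $M^\ast$ of $T^\ast$. In $M^\ast$, clause (c) gives $|W_2|\le |U|^{++}=\aleph_3$, while clause (b) iterated upward gives $\aleph_{n+1}=|W_n|\le |W_2|^{+(n-2)}$, hence $|W_2|\ge\aleph_3$ (using $\mu^{+k}\ge\nu^{+k}$ iff $\mu\ge\nu$). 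Thus $|W_2|=\aleph_3$, and since $|U|=\aleph_1$ the $W_2$-part of $M^\ast$ is an $(\aleph_3,\aleph_1)$-model of $T$. For the second implication one instead places the given $(\aleph_3,\aleph_1)$-model of $T$ on the level $W_3$, stacks $W_4,\dots,W_{n+1}$ above it with (b)- and (c)-type constraints (now the cap is $|W_3|\le|U|^{++}$), applies $(\aleph_{n+1},\aleph_1)\to(\aleph_n,\aleph_0)$, and reads off that the $W_3$-part of the resulting $(\aleph_n,\aleph_0)$-model is an $(\aleph_2,\aleph_0)$-model of $T$.

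I expect the main obstacle to be clause (c) and the downward cardinal bookkeeping it supports. The constraints of clause (b) bound each level only from \emph{above} in terms of the level below, so after transferring — where only the \emph{top} level is pinned, to $\aleph_{n+1}$ (resp.\ $\aleph_n$) — they yield only lower bounds on the lower levels; the matching upper bound on $W_2$ (resp.\ $W_3$) must be produced separately, and since ``$|W_2|>|U|$'' is not first-order, ``$|W_2|\le|U|^{++}$'' must be expressed indirectly through the two auxiliary like-orders of clause (c). One must also check that inserting all this auxiliary structure does not destroy the existence of the tall models in the forward direction, but this is routine: every like-order needed is realized by a genuine ordinal, so no cardinal arithmetic beyond $\aleph_k^{+}=\aleph_{k+1}$ is used and $GCH$ plays no role in the lemma itself — it enters only via the ground model $W$ of Theorem~\ref{silver problem}. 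The remaining points, faithful relativization of $T$'s axioms to $W_2$ (resp.\ $W_3$) and verifying that the transferred model's $U$-predicate has the prescribed size, are routine.
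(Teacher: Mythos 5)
Your argument is correct, and it reaches the theorem by a slightly different decomposition than the paper's. The paper isolates a single biconditional (its Lemma \ref{a model theoretic fact}): for any theory $T$ with a distinguished unary $U$ there is a $T^+$ such that, for every infinite $\beta$, $T$ has a $(\beta^{+n},\beta)$-model iff $T^+$ has a $(\beta^{+n+1},\beta)$-model; the two counterexample theories supplied by Theorem \ref{silver problem} are then pushed up one gap at a time by induction on $n$. You instead prove, for each $n$, a one-shot implication between the transfer principles themselves, reducing everything to the $n=2$ case in a single construction, and you pin the bottom level from \emph{above} (the cap $|W_2|\le|U|^{++}$ via the two auxiliary like-orders $V_1,V_2$) rather than from below (the paper ties its bottom predicate $W_0$ to $U$ by a bijection and climbs upward by successor bounds, so upper bounds on every level come for free from $U$). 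Both proofs rest on the same Vaught-style padding: stacked unary predicates with first-order-enforced constraints $|W_{i+1}|\le|W_i|^+$, plus the standard fact that a linear order all of whose bounded initial segments have size $\le\mu$ has size $\le\mu^+$. The paper's version buys a clean reusable biconditional (it immediately yields Corollary \ref{transfer implications} for arbitrary $\beta$); yours buys a shorter route to exactly the implications needed, with the cardinal bookkeeping done once. That bookkeeping checks out: in the transferred $(\aleph_{n+1},\aleph_1)$-model your clause (c) gives $|W_2|\le\aleph_3$ while clause (b) gives $\aleph_{n+1}\le|W_2|^{+(n-2)}$, hence $|W_2|=\aleph_3$, and symmetrically in the other direction.
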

\begin{remark}
Our proofs can be easily extended to get the following consistency result: assume $\alpha < \beta$ are regular cardinals and assume there exists a Mahlo cardinal above them. Then in a generic extension of $L$, the $GCH$ holds and both transfer principles $(\alpha^{+n}, \alpha) \rightarrow (\beta^{+n}, \beta)$ and $(\beta^{+n}, \beta) \rightarrow (\alpha^{+n}, \alpha)$
fail.
\end{remark}

 In Section \ref{sec:silver problem} we prove Theorem \ref{silver problem} and in Section \ref{extended silver}, we prove Theorem \ref{extended silver problem}. In the last section, we discuss the same problem for the case of gap-1.

\section{Proof of Theorem \ref{silver problem}}
In this section we prove Theorem \ref{silver problem}.
\label{sec:silver problem}
\subsection{On a result of Jensen}
\label{sec:jensen work}
In this subsection we state a result of Jensen \cite{jensen} and mention some of its basic properties which are needed.
Let
$\mathcal{L}=\{\in, A, \mathcal{C}                  \},$
where $A$ is a unary predicate and $\mathcal{C}$ is a function symbol.
Let $T_J$ be the following theory in $\mathcal{L}$:
\begin{center}
``$ZFC^-$$+GCH+ A^+$ is the largest cardinal$+ \mathcal{C}$ is a $\Box_{A^+}$-sequence''.
\end{center}
By a $(\kappa, \lambda)$-model of $T_J$ we mean a model $\mathcal{M}=(M, \in^{\mathcal{M}}, A^{\mathcal{M}}, \mathcal{C}^{\mathcal{M}})$ of $T_J$,
where $|M|=\kappa$ and $|A^{\mathcal{M}}|=\lambda.$
\begin{theorem}
\label{jensen theorem}
(Jensen \cite{jensen})
Assume $GCH+\Diamond_{\beta^+}$ holds, where $\beta$ is a regular cardinal, and suppose  $\kappa > \beta$ is a Mahlo cardinal. Then there is a forcing notion $\MPB_{\beta, \kappa}$ such that if
$K$ is $\MPB_{\beta, \kappa}$-generic
over $V$, then the following hold in $V[K]$:
\begin{enumerate}
\item [(a)] $V[K] \models$``$GCH$''.

\item [(b)] The principle $\Diamond^+_{\beta^+}$ holds.

\item [(c)] The theory $T_J$ does not have any $(\beta^{++}, \beta)$-model.
\end{enumerate}
\end{theorem}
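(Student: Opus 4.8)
The plan is to take $\MPB_{\beta, \kappa}$ to be the two-step iteration $\Col(\beta^+, {<}\kappa) * \dot{\MQB}$, where $\Col(\beta^+, {<}\kappa)$ is the L\'evy collapse turning $\kappa$ into $\beta^{++}$ and $\dot{\MQB}$ names a $<\beta^+$-closed forcing whose generic object is a $\Diamond^+_{\beta^+}$-sequence. The $\Diamond^+$-step has to come \emph{after} the collapse: $\Diamond^+_{\beta^+}$ produces a $\beta^+$-Kurepa tree, while by Silver's argument the L\'evy collapse of an inaccessible adds no $\beta^+$-Kurepa tree, so the collapse destroys $\Diamond^+_{\beta^+}$ and it must be reinstated. (If one prefers, a single ``collapse with $\Diamond^+$-side-conditions'' would do the same job; the analysis is unchanged.)

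Clause (a) is then a routine factor-by-factor check. The collapse $\Col(\beta^+, {<}\kappa)$ is $<\beta^+$-closed, has the $\kappa$-chain condition (by the $\Delta$-system lemma, as $\kappa$ is inaccessible), and has size $\kappa$, so with $GCH$ in $V$ it makes $\kappa = \beta^{++}$, preserves all cardinals outside the interval $(\beta^+, \kappa)$, and preserves $GCH$; and $\MQB$, being $<\beta^+$-closed, $\beta^{++}$-c.c.\ and (under $GCH$) of size at most $\beta^{++}$, preserves all cardinals and $GCH$. For clause (b): since $\MQB$ is the final step, the standard self-referential genericity argument (using that the generic sequence is cofinal in $\beta^{++}$) shows its generic sequence guesses every subset of $\beta^+$ of $V[K]$ on a club, so $\Diamond^+_{\beta^+}$ holds in $V[K]$.

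Clause (c) is the crux, and here I would follow Jensen. Suppose toward a contradiction that $\mathcal{M} = (M, \in^{\mathcal{M}}, A^{\mathcal{M}}, \mathcal{C}^{\mathcal{M}})$ is a $(\beta^{++}, \beta)$-model of $T_J$ in $V[K]$; renaming, $M = \beta^{++} = \kappa$. Using that $\mathcal{M} \models ZFC^- + GCH$, that $\mathcal{M}$ regards $(A^+)^{\mathcal{M}}$ as its largest cardinal, and — crucially — that $\mathcal{C}^{\mathcal{M}}$ is a $\Box_{(A^+)^{\mathcal{M}}}$-sequence, one runs a condensation argument (this is where the hypothesis $\Diamond_{\beta^+}$ enters): the well-founded part of $\mathcal{M}$ contains all of $\beta^{++}$, and the restriction of $\mathcal{C}^{\mathcal{M}}$ to it is, in $V[K]$, a coherent non-threadable sequence on $\beta^{++}$, hence yields a non-reflecting stationary subset of $\beta^{++}$ concentrating on points of cofinality $<\beta^+$. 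On the other hand, $\Col(\beta^+, {<}\kappa)$ together with the Mahloness of $\kappa$ makes every stationary subset of $\beta^{++}=\kappa$ concentrating on cofinality $<\beta^+$ reflect at a point of cofinality $\beta^+$: given such $S$, the $\kappa$-c.c.\ gives a club of $\delta < \kappa$ for which $S \cap \delta$ is already in $V[G\restriction\delta]$; since the $V$-inaccessibles below $\kappa$ are stationary, pick such a $\delta$ inaccessible in $V$; then in $V[K]$ one has $\cf(\delta) = \beta^+$ and $S\cap\delta$ stationary in $\delta$, because the tail forcing $\Col(\beta^+, [\delta,\kappa)) * \dot{\MQB}$ is $<\beta^+$-closed. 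This reflection survives $\MQB$ (which is $<\beta^+$-closed and $\beta^{++}$-c.c.), so it holds in $V[K]$, contradicting the non-reflecting stationary set extracted from $\mathcal{M}$; note there is no clash with clause (b), since $\Diamond^+_{\beta^+}$ (hence a $\beta^+$-Kurepa tree) is perfectly compatible with this much reflection.

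\textbf{The main obstacle} is clause (c), and within it the condensation step: showing that an arbitrary, possibly ill-founded, $(\beta^{++}, \beta)$-model of $T_J$ genuinely yields a coherent non-threadable sequence on the \emph{real} $\beta^{++}$ — controlling the well-founded part and the order types $\operatorname{otp}(C_\alpha)$ of the model's $\Box$-sequence via Jensen's fine-structural/condensation machinery, and using $\Diamond_{\beta^+}$ to do so. The accompanying facts — that collapsing the Mahlo $\kappa$ supplies enough reflection at $\beta^{++}$ to defeat $\square_{\beta^+}$, and that the $<\beta^+$-closed, $\beta^{++}$-c.c.\ forcing $\MQB$ both forces $\Diamond^+_{\beta^+}$ and preserves that reflection — are more routine, though they still need careful bookkeeping; everything in clause (a) is mechanical.
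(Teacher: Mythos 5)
Your overall architecture matches the paper's: $\MPB_{\beta,\kappa}$ is $\Col(\beta^+,{<}\kappa)$ followed by a second forcing that installs $\Diamond^+_{\beta^+}$, with clause (c) coming from the first step (a $(\beta^{++},\beta)$-model of $T_J$ yields a coherent non-threadable sequence on the true $\beta^{++}$, contradicting the stationary reflection obtained by collapsing a Mahlo) and then being preserved through the second step. That much is the same route Jensen takes, and your sketch of the reflection argument for the collapse stage is essentially the paper's Claim 2.6.

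The genuine gap is in the second step. You posit ``a $<\beta^+$-closed forcing whose generic object is a $\Diamond^+_{\beta^+}$-sequence'' and assert it is $\beta^{++}$-c.c.; but the naive end-extension forcing for a $\Diamond^+$-sequence has antichains of size $2^{\beta^+}=\beta^{++}$ (conditions of the same length are compatible only if equal), so it is not $\kappa$-c.c., and without the $\kappa$-c.c.\ both your cardinal arithmetic and, more seriously, your preservation of clause (c) through the second step fall apart. Worse, any forcing witnessing $\Diamond^+_{\beta^+}$ by shooting clubs through $\beta^+$ can kill stationary subsets of $\beta^+$, hence of points $\delta$ of cofinality $\beta^+$, so ``$<\beta^+$-closed and $\beta^{++}$-c.c.\ preserves the reflection'' is not a valid step even granting the chain condition: one must argue directly that no $(\beta^{++},\beta)$-model of $T_J$ appears. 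This is exactly why Jensen's $Add(\Diamond^+_{\beta^+})$ is built the way it is: the hypothesis $\Diamond_{\beta^+}$ is used \emph{here} (not, as you suggest, in the condensation step for (c)) to pre-compute a candidate sequence $\mathcal{S}^*=\langle P(\alpha)\cap M_\alpha:\alpha<\beta^+\rangle$ in $V[G]$ and then force only the verifying clubs, with domain-closure side conditions that make the forcing $\beta^+$-distributive (not closed) and $\kappa$-c.c.\ and allow one to prove Claim 2.7(d). A minor further inaccuracy: your condensation sketch asserts the well-founded part of $\mathcal{M}$ contains $\beta^{++}$; what is actually needed (and true) is that $(\On^{\mathcal{M}},<^{\mathcal{M}})$ is a linear order of cofinality $\beta^{++}$ all of whose proper initial segments have size $\le\beta^+$, along which the square sequence is pulled back.
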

\begin{proof}
As requested by the referees, we sketch the proof of the theorem, by providing the forcing construction $\MPB_{\beta, \kappa}$,  and refer to \cite{jensen} for details.
Let $G$ be $\Col(\beta^+, <\kappa)$-generic over $V$, where
$$\Col(\beta^+, <\kappa)=\{p: \beta^+ \times \kappa \to \kappa: |p| \leq \beta \text{~and for all~}(\alpha, \lambda) \in \dom(p),~p(\alpha, \lambda) < \lambda            \}$$
is the Levy collapse. The next claim is standard.
\begin{claim}
\label{claim levy collapse}
\begin{itemize}
\item [(a)] The forcing $\Col(\beta^+, <\kappa)$ is $\beta^+$-closed and $\kappa$-c.c.

\item [(b)] $V[G] \models$``$GCH+\Diamond_{\beta^+}$''.

\item [(c)]  $V[G] \models$``$\kappa=\beta^{++}$ and $\Box_{\beta^{++}}$ fails''.
\end{itemize}
\end{claim}
In \cite{jensen}, the following strengthening of Claim \ref{claim levy collapse}$(c)$
is proved.
\begin{claim}
\label{claim more levy collapse}
In $V[G],$ the theory $T_J$ has no $(\beta^{++}, \beta)$-model
\end{claim}
From now on we work in $V[G].$ Let $\mathcal{S}=\langle  S_\a: \a < \beta^+     \rangle$
witness $\Diamond_{\beta^+}.$ For each $\alpha < \beta^+$
let $d_\alpha: \beta \to \alpha$ be an onto function and set $d= \langle  d_\alpha: \alpha < \beta^+   \rangle$.
For
$\alpha < \beta^+$ set
\[
M_\alpha= L_{\gamma_\alpha}[\mathcal{S} \upharpoonright \alpha+1, d \upharpoonright \alpha+1],
\]
where $\gamma_\alpha$ is the least ordinal $\gamma > \alpha$
such that $\gamma > \sup_{\nu < \alpha} \gamma_\nu$ and
$$L_{\gamma}[\mathcal{S} \upharpoonright \alpha+1, d \upharpoonright \alpha+1] \models~``ZFC^-\text{~''}.$$
Define
\[
\mathcal{S}^*= \langle   S^*_\alpha: \alpha < \beta^+       \rangle,
\]
where $S^*_\alpha= P(\alpha) \cap M_\alpha.$
We find a generic extension of $V[G]$ in which $\mathcal{S}^*$
is a $\Diamond^+_{\beta^+}$-sequence.

Let $A \subseteq \kappa$ be such that $L_\kappa[A]=H(\kappa)$
and define the sequence $\langle \rho_\nu: \nu < \kappa  \rangle  $ by recursion on $\nu$
as follows: $\rho_\nu$ is the least ordinal $\rho > \beta^+$ such that
\begin{itemize}
\item $\rho > \sup_{\xi < \nu} \rho_\xi.$
\item $\langle M_\alpha: \alpha < \beta^+  \rangle \in L_\rho[A].$
\item $cf(\rho)=\beta^+.$
\item $L_\rho[A] \models$``$ZFC^-+ \forall x, |x| \leq \beta^+.$
\end{itemize}
Set $\tilde{\rho}_\nu = \beta^+ \cup \sup_{\xi < \nu}\rho_\xi$,
\[
\mathcal{U}_\nu = \langle  L_{\rho_\nu}[A], \in, A \cap \rho_\nu,   \langle M_\alpha: \alpha < \beta^+  \rangle                   \rangle,
\]
and for $\nu> 0$ set
\[
\tilde{\mathcal{U}}_\nu = \bigcup_{\xi < \nu}\tilde{\mathcal{U}}_\xi = \langle  L_{\tilde{\rho}_\nu}[A], \in, A \cap \tilde{\rho}_\nu,   \langle M_\alpha: \alpha < \beta^+  \rangle                   \rangle.
\]
Then set

$\hspace{2.5cm}$ $f_\nu=$ the $\mathcal{U}_\nu$-least bijection $f: \beta^+ \leftrightarrow \tilde{\rho}_\nu$.

$\hspace{2.5cm}$ $a_\xi=$ the $\xi$-th  $a \subseteq \beta^+$ in $L_\kappa[A].$

$\hspace{2.5cm}$ $\tilde{a}_\nu = \{ (\xi, \mu): \xi \in a_{f_{\nu}(\mu)}    \}.$

We are now ready to define the desired forcing notion, that we denote  by $Add(\Diamond^+_{\beta^+})$. First we define the forcing notions $Add(\Diamond^+_{\beta^+})_\nu, \nu < \kappa,$ which are the building blocks of the main forcing construction \footnote{In \cite{jensen}, the forcing notion $Add(\Diamond^+_{\beta^+})_\nu$ is denoted by $\MPB_\nu^A$ and the forcing notion $Add(\Diamond^+_{\beta^+})$ is denoted by $\MPB^A$}.

A condition in $Add(\Diamond^+_{\beta^+})_\nu$ is a subset $p$ of $\beta^+$ such that
\begin{enumerate}
\item $p \subseteq \beta^+$ is closed and bounded.
\item  $\alpha \in p \implies \tilde{a}_\nu \cap \alpha \in M_\alpha.$
\end{enumerate}
$Add(\Diamond^+_{\beta^+})_\nu$ is ordered by end extension:
\[
p \leq q \iff q = p \cap (\max(p)+1).
\]
Let us now define $Add(\Diamond^+_{\beta^+}).$ A condition in $Add(\Diamond^+_{\beta^+})$ is a function $p$
such that
\begin{enumerate}
\item $\dom(p) \subseteq \kappa$ and $|\dom(p)| \leq \beta$.
\item  $\forall \nu \in \dom(p), p(\nu) \in Add(\Diamond^+_{\beta^+})_\nu.$
\item If $\nu \in \dom(p),$ then
\begin{enumerate}
\item $f_\nu''[\max(p(\nu))] \subseteq \dom(p).$
\item For each $\xi \in f_\nu''[\max(p(\nu))], ~ \max(p(\xi)) \geq \max(p(\nu))$.
\item $\alpha \in p(\nu) \implies \tilde{C}_{p, \nu} \cap \alpha \in M_\alpha,$ where
\[
\tilde{C}_{p, \nu} = \{  (\mu, \xi) \in \max(p(\nu)) \times \max(p(\nu)): \mu \in p(f_{\nu}(\xi))             \}.
\]
\end{enumerate}
\end{enumerate}
The forcing $Add(\Diamond^+_{\beta^+})$ is ordered as follows: $p \leq q$ if and only if
\begin{center}
$\dom(p) \supseteq \dom(q)$
and for all $\nu \in \dom(q), p(\nu) \leq_{Add(\Diamond^+_{\beta^+})_\nu} q(\nu)$.
\end{center}
Let $H$ be $Add(\Diamond^+_{\beta^+})$-generic over $V[G].$
The next claim is proved in \cite{jensen}.
\begin{claim}
\label{jensen main claim}
\begin{enumerate}
\item [(a)] $Add(\Diamond^+_{\beta^+})$ is $\beta^+$-distributive and $\kappa=\beta^{++}$-c.c.''.

\item [(b)] $V[G \ast H]\models$``$GCH$''.

\item [(c)] $\mathcal{S}^*$ witnesses that $\Diamond^+_{\beta^+}$ holds in $V[G \ast H].$

\item [(d)] The theory $T_J$ does not have a $(\beta^{++}, \beta)$-model in $V[G \ast H].$
\end{enumerate}
\end{claim}
Then $\MPB_{\beta, \kappa}= \Col(\beta^+, <\kappa) \ast \lusim{Add}(\Diamond^+_{\beta^+})$ is as required.
\end{proof}
Suppose $K=G \ast H$ is $\MPB_{\beta, \kappa}$-generic over $V$.
As $\Diamond^+_{\beta^+}$ implies the existence of a $\beta^+$-Kurepa tree \cite{devlin},  in $V[K],$ we have $\beta^+$-Kurepa trees.

\subsection{Completing the proof of Theorem \ref{silver problem}}
\label{sec:completing the proof}
In this subsection we complete the proof of Theorem \ref{silver problem}. Thus assume $V=L$ and let $\kappa$ be a Mahlo cardinal. Let $\lambda$ be the least inaccessible cardinal. So $\lambda < \kappa.$ Let $G$ be $\Col(\aleph_1, < \lambda)$-generic over $L$. Then:
\begin{lemma}
\label{extension by levy collapse}

\begin{itemize}

\item [(a)] $L[G]\models$ ``There are no $\aleph_1$-Kurepa trees''.
\item [(b)] $L[G]\models$ `` $GCH$ holds''.
 \item [(c)] $L[G]\models$ `` $\kappa$ is a Mahlo cardinal''.
\end{itemize}
\end{lemma}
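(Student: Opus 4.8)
The plan is to verify each of the three items by appealing to standard facts about the Levy collapse $\Col(\aleph_1,<\lambda)$, together with the fact that $\lambda$ is the least inaccessible and $\kappa>\lambda$ is Mahlo in $L$.

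\textbf{Item (a).} First I would recall that $\lambda$ being inaccessible makes $\Col(\aleph_1,<\lambda)$ a $\lambda$-c.c.\ forcing of size $\lambda$ which is $\aleph_1$-closed, so it collapses every cardinal in the open interval $(\aleph_1,\lambda)$ to $\aleph_1$ and makes $\lambda=\aleph_2^{L[G]}$. The non-existence of $\aleph_1$-Kurepa trees in $L[G]$ is precisely Silver's argument as recalled in the introduction of this paper (the same argument that gives the failure of $(\aleph_3,\aleph_1)\to(\aleph_2,\aleph_0)$): in $V[G]$ any potential $\aleph_1$-Kurepa tree would have to appear at some intermediate stage by the $\lambda$-c.c., but the $\aleph_1$-closed tail collapses its levels and kills the Kurepa-ness. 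I would cite Devlin \cite{devlin1} (as the paper already does in the Remark) for the clean statement that $\Col(\aleph_1,<\lambda)$ with $\lambda$ inaccessible forces ``there are no $\aleph_1$-Kurepa trees''.

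\textbf{Item (b).} Since $V=L$ satisfies $GCH$ and $\Col(\aleph_1,<\lambda)$ is $\aleph_1$-closed and has the $\lambda$-c.c.\ with $|\Col(\aleph_1,<\lambda)|=\lambda$, a routine nice-names count shows $GCH$ is preserved: $2^{\aleph_0}=2^{\aleph_1}=\aleph_2=\lambda$ in $L[G]$, and for $\mu\geq\lambda$ the number of nice names for subsets of $\mu$ is $\mu^{<\lambda}\cdot\lambda^{\mu}\cdot\dots=\mu^+$ in $L$, using $GCH$ there. I would just state this as the standard preservation of $GCH$ by the Levy collapse of an inaccessible.

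\textbf{Item (c).} This is the item I expect to require the most care, though it is still standard: I must check that $\kappa$ remains Mahlo in $L[G]$. Since $\lambda<\kappa$ and $|\Col(\aleph_1,<\lambda)|=\lambda<\kappa$, the forcing has size $<\kappa$, so $\kappa$ stays regular (indeed inaccessible, using item (b) for $GCH$ below $\kappa$, hence the strong limit property) in $L[G]$. For Mahloness I would argue that any club $C\subseteq\kappa$ in $L[G]$ has a club subset lying in $L$: by the $\lambda$-c.c.\ (hence $\kappa$-c.c.) every club of $\kappa$ in $L[G]$ contains a ground-model club (its set of limit points of elements forced into it, intersected appropriately, or via the standard argument that $\kappa$-c.c.\ forcing preserves stationarity of clubs). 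Since $\kappa$ is Mahlo in $L$, that ground-model club contains an inaccessible (of $L$); and since the forcing has size $<\kappa$, such an $L$-inaccessible above $\lambda$ remains regular, and it remains a strong limit by $GCH$ in $L[G]$ — so it is inaccessible in $L[G]$ and lies in $C$. Hence the regular cardinals below $\kappa$ are stationary in $L[G]$, i.e.\ $\kappa$ is Mahlo there. The only subtlety is making sure the inaccessible we extract from the $L$-club is genuinely above $\lambda$, which is immediate since clubs in $\kappa$ are cofinal and we may intersect with the tail $(\lambda,\kappa)$.
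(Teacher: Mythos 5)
Your proposal is correct and matches the paper's (much terser) proof in substance: the paper simply cites Silver for (a) and (b) and observes that (c) follows because the forcing has size $<\kappa$, which is exactly the small-forcing-preserves-Mahloness argument you spell out. The extra detail you supply (nice-name counting for $GCH$, the club argument for Mahloness) is standard and accurate.
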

\begin{proof}
$(a)$ and $(b)$ hold by \cite{silver}, and $(c)$ is clear, as the forcing $\Col(\aleph_1, < \lambda)$ has size $< \kappa$.
\end{proof}
Let $K$ be $\MPB^{L[G]}_{\aleph_1, \kappa}$-generic over $L[G]$.
 We show that $L[G \ast K]$ is the required model. First note that by
 Theorem \ref{jensen theorem},
 \begin{center}
$L[G \ast K] \models$`` there exists an $\aleph_2$-Kurepa tree''.
\end{center}
But by Lemma \ref{extension by levy collapse},  $L[G]\models$ ``There are no $\aleph_1$-Kurepa trees''.
On the other hand, $L[G] \models$``$\MPB_{\aleph_1, \kappa}$ is $\lambda=\aleph_2$-distributive'',
in particular
 \begin{center}
$L[G \ast K]\models$ ``There are no $\aleph_1$-Kurepa trees''.
 \end{center}
It follows that
\[
L[G \ast K] \models \text{~``~}(\aleph_3, \aleph_1) \rightarrow (\aleph_2, \aleph_0) \text{~fails ''}.
\]
On the other hand, by Theorem  \ref{jensen theorem}(b),
$L[G \ast K] \models$``$T_J$ does not have an $(\aleph_3, \aleph_1)$-model''.
We show that $T_J$ has an $(\aleph_2, \aleph_0)$-model in $L[G \ast K]$. First note that $\aleph_2^{L[G \ast K]}=\lambda,$ which is inaccessible but not Mahlo in
$L$, so it follows from results of Jensen and Solovay (see \cite{devlin}) that
$\Box_{\aleph_1}$ holds in both $L[G]$ and $L[G \ast K]$. Let $\mathcal{C}=\langle C_\alpha: \alpha < \lambda, \lim(\alpha) \rangle \in L[G]$
witness this.
Consider the model
\[
\mathcal{M}=(H(\lambda)^{L[G]}, \in, \aleph_0, \mathcal{C}),
\]
where $\aleph_0$ is considered as the interpretation of $A.$
Then $\mathcal{M}$ is an  $(\aleph_2, \aleph_0)$-model of $T$. So
\[
L[G \ast K] \models \text{~``~}(\aleph_2, \aleph_0) \rightarrow (\aleph_3, \aleph_1) \text{~fails ''}.
\]
The theorem follows.

\section{A general model theoretic fact and the proof of Theorem \ref{extended silver problem}}
\label{extended silver}
In this section we prove a general model theoretic fact, and use it to prove Theorem \ref{extended silver problem}.
\subsection{A general model theoretic fact} In this subsection we prove the following lemma and consider some of its consequences.
\begin{lemma}
\label{a model theoretic fact}
Assume $n \geq 1,$ $\mathcal{L}$ is a first order language which contains a unary predicate $U,$
and $T$ is a theory in $\mathcal{L}$.  Then there are $\mathcal{L}^+ \supseteq \mathcal{L}$ and a theory $T^+$ in
$\mathcal{L}^+$, such that for all infinite cardinals $\beta$:
\begin{center}
$T$ has a $(\beta^{+n}, \beta)$-model $\iff$ $T^+$ has a $(\beta^{+n+1}, \beta)$-model.
\end{center}
\end{lemma}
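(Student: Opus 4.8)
The goal is to "lift" the gap by one level: starting from a language/theory that controls a $(\beta^{+n},\beta)$-model, build a new language/theory whose models encode the old ones but with the universe blown up by one cardinal. The natural device is to add a fresh unary predicate $V$ (with $U^{\mathcal M}\subseteq V^{\mathcal M}\subseteq M$) together with a binary function or relation that, for each element $x$ outside $V^{\mathcal M}$, codes a distinct subset of $V^{\mathcal M}$; combined with axioms forcing $V^{\mathcal M}$ to have exactly the cardinality of the old universe and $M$ to be as large as the power set, one arranges that in any $(\beta^{+n+1},\beta)$-model the set $V^{\mathcal M}$ must have size $\beta^{+n}$, and conversely. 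More precisely, I would take $\mathcal L^+ = \mathcal L \cup \{V, E, \ldots\}$ where $V$ is unary, $E$ is binary, and $T^+$ asserts: (i) $U \subseteq V$; (ii) the $\mathcal L$-reduct of the structure restricted to $V$ (relativizing all quantifiers to $V$) is a model of $T$ — this requires that every function symbol of $\mathcal L$ be closed on $V$, which one builds into $T^+$; (iii) $E$ is extensional on the complement $M \setminus V$, i.e. distinct elements of $M \setminus V$ have distinct "$E$-fibers" $\{y \in V : yEx\}$; and (iv) enough of $\mathrm{GCH}$-style coding to make the cardinalities line up.

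The two directions would go as follows. For ($\Rightarrow$): given a $(\beta^{+n},\beta)$-model $\mathcal N \models T$ with $|N| = \beta^{+n}$ and $|U^{\mathcal N}| = \beta$, I would let $V^{\mathcal M} = N$, adjoin $\beta^{+n+1}$-many new points as $M \setminus V^{\mathcal M}$, and use the fact that under $\mathrm{GCH}$ (which holds in our intended model, but for the pure model-theoretic statement one works inside a model of set theory where it holds, or simply takes $\beta^{+n+1}$-many of the available subsets of an index set of size $\beta^{+n}$ — there are $2^{\beta^{+n}} \geq \beta^{+n+1}$ of them) to assign to each new point a distinct $E$-fiber inside $V^{\mathcal M}$. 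Interpreting the $\mathcal L$-symbols on $V^{\mathcal M}$ as in $\mathcal N$ and trivially (or by some fixed convention) elsewhere, one checks $\mathcal M \models T^+$ and $|M| = \beta^{+n+1}$, $|U^{\mathcal M}| = \beta$. For ($\Leftarrow$): given $\mathcal M \models T^+$ with $|M| = \beta^{+n+1}$, $|U^{\mathcal M}| = \beta$, the relativization of $\mathcal M$ to $V^{\mathcal M}$ is a model of $T$ with $|U| = \beta$; the extensionality axiom (iii) gives an injection of $M \setminus V^{\mathcal M}$ into $\mathcal P(V^{\mathcal M})$, so $|M \setminus V^{\mathcal M}| \leq 2^{|V^{\mathcal M}|}$, while a dual coding axiom (some $E'$ or a Skolem-type condition forcing $V^{\mathcal M}$ to be "large enough to separate all of $M$") forces $|V^{\mathcal M}| \geq$ the predecessor cardinal; one then argues $|V^{\mathcal M}| = \beta^{+n}$ and the reduct is the desired $(\beta^{+n},\beta)$-model of $T$.

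The main obstacle is making the cardinality bookkeeping work for \emph{all} infinite $\beta$ simultaneously with a single $(\mathcal L^+, T^+)$, and in a way that does not secretly assume more than $\mathrm{GCH}$. The delicate point is the lower bound $|V^{\mathcal M}| \geq \beta^{+n}$ in the ($\Leftarrow$) direction: knowing only $|M| = \beta^{+n+1}$ and $U^{\mathcal M} = \beta$, one must prevent $V^{\mathcal M}$ from being too small (e.g. of size $\beta^{+k}$ for some $k<n$ with the extra points coded into its power set if a local failure of $\mathrm{GCH}$ allowed it). I expect this is handled by adding to $T^+$ an axiom schema expressing a weak form of $\mathrm{GCH}$/cardinal arithmetic internal to the models — e.g. stating that there is a definable surjection from $V^{\mathcal M}$ (or from a specified definable "$n$-th successor of $U^{\mathcal M}$ inside $V^{\mathcal M}$") onto all of $M$ via the $E$-fibers, together with a Kurepa-free or GCH-type assertion ensuring $2^{|U^{\mathcal M}|}$ and its successors behave correctly — so that in the intended ground model (where full $\mathrm{GCH}$ holds) the reduct genuinely has size $\beta^{+n}$. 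I would isolate exactly the internal set-theoretic assertion needed, verify it is first-order expressible in $\mathcal L^+$, check it is satisfied by the structures produced in the ($\Rightarrow$) direction, and check it yields the cardinality equalities in the ($\Leftarrow$) direction; once the statement is pinned down the rest is routine verification.
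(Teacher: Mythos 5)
Your overall plan---relativize $T$ to a distinguished predicate and add axioms that pin down the cardinality of that predicate to exactly $\beta^{+n}$ inside a universe of size $\beta^{+n+1}$---is the right shape, but the mechanism you propose for the cardinality control does not work, and this is precisely the heart of the lemma. Coding the points of $M\setminus V$ by distinct $E$-fibers (subsets of $V^{\mathcal M}$) only yields $|M|\le 2^{|V^{\mathcal M}|}$, whereas what is needed is $|M|\le |V^{\mathcal M}|^{+}$; these coincide only under CH at $|V^{\mathcal M}|$. Since the lemma is asserted in ZFC for \emph{all} infinite $\beta$, and since your proposed repair---an axiom schema expressing ``internal GCH''---cannot succeed (a first-order theory satisfied by $\mathcal M$ constrains only the model's internal notion of power set and cardinal, not the true external cardinalities of $V^{\mathcal M}$ and $M$; the model need not be well-founded, let alone transitive), the backward direction is left without a proof: a $(\beta^{+n+1},\beta)$-model of your $T^{+}$ could have $|V^{\mathcal M}|$ anywhere from $\beta$ up to $\beta^{+n+1}$. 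Moreover, even granting GCH externally, a single intermediate predicate $V$ with $U\subseteq V\subseteq M$ gives you no upper bound on $|V^{\mathcal M}|$ at all, so nothing forces $|V^{\mathcal M}|=\beta^{+n}$ rather than, say, $\beta^{+n+1}$.

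The device that actually works (and is what the paper uses) replaces power-set coding by successor-cardinal coding via a linear order: add a linear ordering $<$ of the universe, a chain of unary predicates $W_0\subseteq W_1\subseteq\dots\subseteq W_n$ each an initial segment of the next (and $W_n$ an initial segment of the universe), a bijection $F_{-1}$ of $U$ onto $W_0$, and for each $i$ a ternary relation $F_i$ asserting that for every $x\in W_{i+1}\setminus W_i$ the fiber $\{(y,z):F_i(x,y,z)\}$ is a bijection of $W_i$ onto $\{z:z<x\}$ (and similarly $F_n$ for $x\notin W_n$). The statement ``every proper initial segment bounded by a point of the next level is equinumerous with the current level'' is first-order, and it yields $|W_{i+1}|\le |W_i|^{+}$ and $|M|\le |W_n|^{+}$ outright in ZFC, with no appeal to the continuum function. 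Combined with $|W_0|=|U^{\mathcal M}|=\beta$ and $|M|=\beta^{+n+1}$, an induction forces $|W_i|=\beta^{+i}$ exactly, which is the two-sided pinning your sketch lacks; the forward direction is then the obvious expansion of a $(\beta^{+n},\beta)$-model of $T$ by the cardinals $\beta,\beta^{+},\dots,\beta^{+n}$ and the canonical bijections. Note also that this necessarily uses $n+1$ intermediate predicates, one per successor step, rather than the single $V$ in your sketch.
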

\begin{proof}
Let $\mathcal{L}^+ = \mathcal{L} \cup \{<, W_0, \dots, W_n, F_{-1}, F_0, \dots, F_n              \}$
where $<$ is a binary predicate symbol, $W_i$'s are unary predicate symbols, $F_{-1}$ is a binary predicate symbol and $F_i$'s, $0 \leq i \leq n,$ are ternary
predicate symbols.
Let $T^+$ consists of the following axioms:
\begin{enumerate}
\item $\phi^{W_n},$ for each $\phi \in T,$ where $\phi^{W_n}$ is the relativization of $\phi$ to $W_n$.
\item $<$ is a linear ordering of the universe.
\item Under $<$, each $W_i$ is an initial segment of $W_{i+1}, i<n,$ and $W_n$ is an initial segment of the universe (in particular $W_0 \subseteq W_1 \subseteq
\dots \subseteq W_n$).
\item $U \subseteq W_n$ (i.e., $\forall x (U(x) \to W_n(x))$).
\item $F_{-1} \subseteq U \times W_0$ defines a bijection from $U$ onto $W_0$.
\item For each $0 \leq i < n, F_i \subseteq (W_{i+1}\setminus W_i) \times W_i \times W_{i+1}$ is such that if $x \in W_{i+1} \setminus W_i,$
then $\{(y, z): F_i(x, y, z)          \}$
is a bijection from $W_i$ onto $\{z \in W_{i+1}: z < x           \}$.
\item $F_n$ is such that if $x \notin W_n,$ then $\{(y, z): F_n(x, y, z)          \}$
is a bijection from $W_n$ onto $\{z: z < x           \}$.
\end{enumerate}
Now suppose that $T$ has a $(\beta^{+n}, \beta)$-model $\mathcal{M} = (\beta^{+n}, U^{\mathcal{M}}, \dots)$. Consider the model
\[
\mathcal{M}^+ = (\beta^{+n+1}, \mathcal{M}, <, \beta,  \dots, \beta^{+n}, f_{-1}, f_0, \dots, f_n),
\]
where $f_{-1}: U^{\mathcal{M}} \leftrightarrow \beta,$ each $f_i, 0 \leq i \leq n$
is such that for each $\beta^{+i} \leq \gamma < \beta^{+i+1}, \{(\zeta, \eta): (\gamma, \zeta, \eta) \in f_i            \}$
defines a bijection $\beta^{+i} \leftrightarrow \gamma.$
It is easily seen that $\mathcal{M}^+$ is a $(\beta^{+n+1}, \beta)$-model for $T^+$.

Conversely assume that $\mathcal{M}^+$ is a $(\beta^{+n+1}, \beta)$-model for $T^+$. Consider the model
$\mathcal{M}$ which is obtained from $\mathcal{M}^+ \upharpoonright \mathcal{L}$, by replacing its universe with
$W_n^{\mathcal{M}^+}$. It follows from $(1)$ that $\mathcal{M}$
is a model of $T$. We show that it is a  $(\beta^{+n}, \beta)$-model. We have $U^{\mathcal{M}} = U^{\mathcal{M}^+},$
which has size $\beta.$ On the other hand, axioms $(4)$-$(6)$ can be used to show that $|W_0^{\mathcal{M}^+}|=\beta,$
$|W_{i+1}^{\mathcal{M}^+}| \leq |W_i^{\mathcal{M}^+}|^+$
 and $|W_m^{\mathcal{M}^+}| \geq \beta^{+n}$, so by induction on $i \leq n$, we have $|W_i^{\mathcal{M}^+}|=\beta^{+i}.$
In particular $|W_n^{\mathcal{M}^+}|=\beta^{+n},$ and the result follows.
\end{proof}
\begin{corollary}
\label{transfer implications}
For each $n \geq 1,$ the gap-$(n+1)$-cardinal transfer principle implies the gap-$n$-cardinal transfer principle.
\end{corollary}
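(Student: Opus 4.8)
The plan is a direct application of Lemma \ref{a model theoretic fact}, used twice, with the gap-$(n+1)$-cardinal transfer principle inserted in the middle. Fix $n \geq 1$ and assume the gap-$(n+1)$-cardinal transfer principle. To verify the gap-$n$-cardinal transfer principle, let $\kappa$ and $\lambda$ be infinite cardinals, let $\mathcal{L}$ be a countable first order language containing a unary predicate $U$, let $T$ be an $\mathcal{L}$-theory, and suppose $T$ has a $(\kappa^{+n}, \kappa)$-model; I must produce a $(\lambda^{+n}, \lambda)$-model of $T$.

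First I would feed $\mathcal{L}$, $T$ and $n$ into Lemma \ref{a model theoretic fact} to obtain a language $\mathcal{L}^+ \supseteq \mathcal{L}$ and a theory $T^+$ in $\mathcal{L}^+$ enjoying the stated equivalence for every infinite cardinal $\beta$. Note that $\mathcal{L}^+$ adds only finitely many new symbols to $\mathcal{L}$, so $\mathcal{L}^+$ is still countable. Applying the equivalence with $\beta = \kappa$, the hypothesis that $T$ has a $(\kappa^{+n}, \kappa)$-model yields that $T^+$ has a $(\kappa^{+(n+1)}, \kappa)$-model.

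Next I would invoke the gap-$(n+1)$-cardinal transfer principle, in its instance $(\kappa^{+(n+1)}, \kappa) \rightarrow (\lambda^{+(n+1)}, \lambda)$, applied to the theory $T^+$ in the countable language $\mathcal{L}^+$: since $T^+$ has a $(\kappa^{+(n+1)}, \kappa)$-model, it has a $(\lambda^{+(n+1)}, \lambda)$-model. Finally I would apply Lemma \ref{a model theoretic fact} once more, now using the equivalence with $\beta = \lambda$ in the $\Leftarrow$ direction: from a $(\lambda^{+(n+1)}, \lambda)$-model of $T^+$ we get a $(\lambda^{+n}, \lambda)$-model of $T$, as required. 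Since $\kappa$, $\lambda$, $\mathcal{L}$ and $T$ were arbitrary, this establishes $\forall \kappa\, \forall \lambda\, (\kappa^{+n}, \kappa) \rightarrow (\lambda^{+n}, \lambda)$.

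There is essentially no real obstacle here beyond bookkeeping. The two points that deserve a moment's attention are that $\mathcal{L}^+$ is countable, so the gap-$(n+1)$ principle genuinely applies to $T^+$, and that the single pair $(\mathcal{L}^+, T^+)$ delivered by the lemma must serve simultaneously at $\beta = \kappa$ (to lift the given model) and at $\beta = \lambda$ (to descend the transferred model); both are exactly what Lemma \ref{a model theoretic fact} provides, since its conclusion is uniform in $\beta$.
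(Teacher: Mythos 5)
Your proof is correct and is exactly the argument the paper intends: the corollary is stated as an immediate consequence of Lemma \ref{a model theoretic fact}, obtained by lifting the $(\kappa^{+n},\kappa)$-model to a $(\kappa^{+n+1},\kappa)$-model of $T^+$, transferring, and descending again, just as you do. Your added check that $\mathcal{L}^+$ remains countable (so the gap-$(n+1)$ principle applies to $T^+$) is a worthwhile detail the paper leaves implicit.
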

\begin{remark}
\label{enayat remark}
In personal communication, Ali Enayat informed us that Corollary 3.2 is an immediate consequence of the downward L\"{o}wenheim-Skolem theorem, i.e., the fact that if $\mathcal{M}=(M, \dots)$ is an infinite structure in a countable language and $X$ is any subset of $M$, then there is an elementary substructure $\mathcal{M}_0=(M_0, \dots)$ of $\mathcal{M}$ that includes $X$ and whose cardinality is  $\max \{\aleph_0, |X|\}$.  Using this theorem, it is easy to see that every model $\mathcal{M}$ that exhibits a gap-$m$ model, say $(\kappa^{+m}, \kappa),$ for some $m>0$ has an elementary sub-model $\mathcal{M}_0$ that exhibits a gap-$n$ model $(\kappa^{+n}, \kappa)$ for all $n<m.$
\end{remark}
\subsection{Proof of Theorem \ref{extended silver problem}}
In this subsection we complete the proof of Theorem \ref{extended silver problem}.  Let $L[G \ast H]$
be the model obtained in Subsection \ref{sec:completing the proof}. So in $L[G \ast H]$
both transfer principles $(\aleph_3, \aleph_1) \rightarrow (\aleph_2, \aleph_0)$
and $(\aleph_2, \aleph_0) \rightarrow (\aleph_3, \aleph_1)$ fail. So, by induction, and using Lemma \ref{a model theoretic fact}, for each $n \geq 2,$ the transfer principles
$$(\aleph_{n}, \aleph_0) \rightarrow (\aleph_{n+1}, \aleph_1)$$
  and
$$(\aleph_{n+1}, \aleph_1) \rightarrow (\aleph_{n}, \aleph_0)$$
  fail in $L[G \ast H].$

\section{The case of gap-$1$ and some problems}
In general, we can not hope to prove a result as above for gap-$1$-cardinal transfer principles. This is because of Vaught's theorem \cite{vaught2} that the transfer principle
$(\beta^+, \beta) \rightarrow (\aleph_1, \aleph_0)$ is a theorem of $ZFC$. However we do not know the answer to the following question:
\begin{question}
 Is it consistent that both transfer principles $(\aleph_2, \aleph_1) \rightarrow (\aleph_3, \aleph_2)$ and
 $(\aleph_3, \aleph_2) \rightarrow (\aleph_2, \aleph_1)$ fail simultaneously.
\end{question}
As we showed in Corollary \ref{transfer implications}, the gap-$(n+1)$-cardinal
 transfer principle implies the gap-$n$-cardinal transfer principle.

On the other hand if $L[G]$ is a generic extension of $L$ by the Levy collapse of an inaccessible cardinal $\kappa$ to $\aleph_2,$
then it follows from results of Vaught \cite{vaught2}, Chang \cite{chang} and Jensen \cite{devlin}
that the gap-$1$-cardinal transfer principle holds in $L[G]$, while by Silver's result stated in the introduction,
the gap-$2$-cardinal transfer principle fails in $L[G]$. We do not know the answer for higher gaps.
\begin{question}
Assume $n>1$. Is it consistent that the gap-$n$-cardinal
 transfer principle holds while the  gap-$(n+1)$-cardinal
 transfer principle fails$?$
\end{question}

\subsection*{Acknowledgements}

The authors would like to thank the referees of the paper for their very helpful comments and corrections. They also thank Ali Enayat for
his interest in this work and his Remark \ref{enayat remark}.

School of Mathematics, Institute for Research in Fundamental Sciences (IPM), P.O. Box:
19395-5746, Tehran-Iran.

E-mail address: golshani.m@gmail.com

School of Mathematics, Institute for Research in Fundamental Sciences (IPM), P.O. Box:
19395-5746, Tehran-Iran.

E-mail address: sh.mohsenipour@gmail.com

\end{document}